\theoremstyle{plain}
\newtheorem{thm}{Theorem}
\newtheorem{prop}{Proposition}
\theoremstyle{definition}
\newtheorem{deff}{Definition}
\newtheorem{nota}{Notation}
\theoremstyle{remark}
\newtheorem{rem}{Remark}
\newcommand{\bb}{\color{blue}}
\newcommand{\rr}{\color{red}}
\newcommand{\bk}{\color{black}}
\newcommand{\dd}{\ensuremath{\displaystyle}}
\newcommand{\supl}{\ensuremath{\sup\limits}}
\newcommand\intl{\ensuremath{\int\limits}}
\newcommand{\PP}{\ensuremath{\mathbf{P}}}
\newcommand{\PPP}{\ensuremath{\mathscr{P}}}
\newcommand{\UU}{\ensuremath{\mathscr{U}}}
\newcommand{\EE}{\mathsf E}
\newcommand{\EEE}{\mathcal E}
\newcommand{\1}{\ensuremath{\mathbf{1}}}
\newcommand{\ud}{\,\mathrm{d}}
\newcommand{\bd}{\stackrel{\text{\rm def}}{=\!\!\!=}}
\title{
\bf On strong bounds of rate of convergence for regenerative processes}
\author{G.~A.~Zverkina \thanks{The author is supported by the RFBR, project No No 17-01-00633 A.}}
\begin{document}
\selectlanguage{english}
\maketitle
\begin{abstract}
We give strong bounds for the rate of convergence of the regenerative process distribution to the stationary distribution in the total variation metric.
These bounds are obtained by using coupling method.
We propose this method for obtaining such bounds for the queueing regenerative processes.
\\
\textbf{Keywords} {\it Regenerative process, queuing theory, rate of convergence, total variation metrics, coupling method}
\end{abstract}

\section{Introduction}
We study the rate of convergence of distribution of regenerative process to the stationary distribution in the total variation metric.

Many queueing processes are regenerative, and establishing bounds for the rate of their convergence is a very important problem for the practical applications of the queueing theory. Recall the definition of regenerative process. \\
 \begin{deff} \label{zvlab1} 
The process $\left(X_t, \,t \geqslant 0\right)$ on a probability space $\left(\Omega, \mathscr{F} , \mathbf{P}\right)$, with a measurable state space $\left(\mathscr{X} , \mathscr{B} \left(\mathscr{X}\right)\right)$ is regenerative, if there exists an increasing sequence $ \left \{ \theta_n \right \} $ $\left(n \in \mathbb{Z} _+\right)$ of Markov moments with respect to the filtration $ \mathscr{F} _{t \geqslant 0} $ such that the sequence
$$
 \left \{ \Theta_n \right \} = \left \{ X_{t+ \theta_{n-1} } -X_{ \theta_{n-1} } , \theta_n- \theta_{n-1} ,t \in[ \theta_{n-1} , \theta_n) \right \} , \quad n \in \mathbb{N}
$$
consists of independent identically distributed (i.i.d.) random elements on $\left(\Omega, \mathscr{F} , \mathbf{P}\right)$.
If $ \theta_0 \neq 0$, then the process $\left(X_t, \,t \geqslant 0\right)$ is called delayed.
 \end{deff}

Denote $ \zeta_n \stackrel { \rm{ \;def} } {= \! \! \!=} \theta_n- \theta_{n-1} $, and let $F\left(s\right)= \mathbf{P} \left \{ \zeta_n \leqslant s \right \} = \mathbf{P} \left \{ \zeta_1 \leqslant s \right \} $ ($n \in \mathbb{N} $) be the distribution function of regeneration period; we suppose that the distribution $F$ is not lattice.

Also denote $ \zeta_0 \stackrel{ \rm{ \;def} } {= \! \! \!=} \theta_0$, $ \mathbb{F} \left(s\right) \stackrel{ \rm{ \;def} } {= \! \! \!=} \mathbf{P} \left \{ \zeta_0 \leqslant s \right \} $.

Denote $ \mathscr{P} _t^{X_0} \left(A\right)= \mathbf{P} \{X_t \in A \} $ for the process $\left(X_t,\,t\geqslant 0\right)$ with the initial state $X_0$.

If $ \mathbf{E} \, \zeta_i< \infty$, then for all $X_0$ we have $ \mathscr{P} _t^{X_0} \Longrightarrow \mathscr{P} $ where $ \mathscr{P} $ is the stationary distribution of the process $\left(X_t,\,t\geqslant 0\right)$.

In the 70s A.A.~Borovkov showed that if the condition
$$
 \mathbf{E}\,e^{\dd\alpha \zeta}<\infty,\;\; \alpha>0
$$
is satisfied, then for all set $A\in\mathscr{B}\left(\mathscr{X}\right)$ and for all $a<\alpha$ there exists $\mathfrak{R}\left(a,A\right)$ such that for all $t>0$:
$$
|\mathbf{P}\{X_t\in A\}-\mathscr{P}\left(A\right) |<\frac{\mathfrak{R}\left(a,A\right)}{e^{\dd a t}},
$$
and if the condition
$$
 \qquad\mathbf{E}\,\zeta^K<\infty,\;\; k>1
$$
is satisfied, then for all set $A\in\mathscr{B}\left(\mathscr{X}\right)$ and for all $k<K-1$ there exists $R\left(k,A\right)$ such that for all $t>0$:
$$
 |\mathbf{P}\{X_t\in A\}-\mathscr{P}\left(A\right) |<\frac{R\left(k,A\right)}{t^{\kappa}},
$$
where $\mathscr{P}$ is the stationary distribution of $\left(X_t,\,t\geqslant 0\right)$, but bounds of the constants $\mathfrak{R}\left(\cdot\right)$ and $R\left(\cdot\right)$ are unknown.

Later this results was repeated and extended by probabilistic approaches, namely, by \emph{modified} coupling method: shift-coupling and distributional shift-coupling \cite{Thor,ThorDistr}, $\varepsilon$-coupling \cite{Kalash,kalashreg}.

So, now it is known that if $ \mathbf{E} \, \zeta_n^K< \infty$ for some $K>1$ then for every $k < K-1$ and $X_0 \in \mathscr{X} $
$$
 \lim \limits_{t \to \infty} t^k \left \| \mathscr{P} _t^{X_0} - \mathscr{P} \right \|_{TV} =0,
$$
and if $ \mathbf{E}\, e^{\dd\alpha \zeta_i}< \infty$ for some $ \alpha>0$ then for every $a< \alpha$ and $X_0 \in \mathscr{X} $
$$
\lim \limits_{t \to \infty} e^{\dd at} \left \| \mathscr{P} _t^{X_0} - \mathscr{P} \right \|_{TV} =0
$$
(see, e.g., \cite{Down,Kalash,kalashreg,Lindvall,Silv,Thor,ThorDistr} et al.).

So, we know two propositions:
\begin{prop}
 If $ \mathbf{E} \, \zeta_n^K< \infty$ for some $K>1 $, then for some $k < K-1$ and for all $X_0 \in \mathscr{X} $ there exists $C\left(X_0,k\right)$ such that
$$ \left \| \mathscr{P} _t^{X_0} - \mathscr{P} \right \|_{TV} \leqslant \frac{ C\left(X_0,k\right)}{\left(1+t\right)^{k}}.
$$
\end{prop}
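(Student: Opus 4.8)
The plan is to use the coupling method. On a common probability space I would realise the process $(X_t)$ started from $X_0$ together with a stationary copy $(\tilde X_t)$ whose initial distribution is $\mathscr{P}$, and invoke the coupling inequality: if $\tau$ is a random time after which the two trajectories can be made to coincide, then
$$
\left\|\mathscr{P}_t^{X_0}-\mathscr{P}\right\|_{TV}\leqslant 2\,\mathbf{P}\{\tau>t\}.
$$
Thus the entire problem reduces to constructing such a coupling and estimating the tail $\mathbf{P}\{\tau>t\}$. The dependence of the final constant $C(X_0,k)$ on the initial state will enter only through the distribution of the first (delay) regeneration interval determined by $X_0$.

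Next I would reduce the coupling of the two trajectories to the coupling of their regeneration sequences $\{\theta_n\}$ and $\{\tilde\theta_n\}$. By Definition \ref{zvlab1}, once the two processes possess a common regeneration epoch, the post-regeneration fragments $\{\Theta_n\}$ are i.i.d.\ and can be glued together, so that $\tau$ is dominated by the first instant at which the two renewal streams have a simultaneous renewal. The stationary copy is precisely the renewal process whose delay interval $\zeta_0$ follows the equilibrium law with density $\bar F(x)/\mathbf{E}\zeta$, where $\bar F=1-F$; this is what makes $(\tilde X_t)$ stationary.

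The core of the argument is Step~3: the construction of a successful coupling of these two renewal processes and the estimation of $\mathbf{P}\{\tau>t\}$. Since $F$ is non-lattice, at suitable renewal epochs the residual lifetimes of the two streams can be matched with positive probability, and iterating this over successive attempts produces a geometric number of trials, each costing a random inter-renewal time. The key point is that the waiting mechanism is governed by the \emph{integrated} tail of $F$: under $\mathbf{E}\zeta^K<\infty$ one has $K\int_0^\infty s^{K-1}\bar F(s)\,\mathrm{d}s=\mathbf{E}\zeta^K<\infty$, whence $\int_t^\infty\bar F(s)\,\mathrm{d}s\leqslant t^{-(K-1)}\int_t^\infty s^{K-1}\bar F(s)\,\mathrm{d}s=o\!\left(t^{-(K-1)}\right)$. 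One power of the tail is therefore lost to the equilibrium residual life, so the coupling time satisfies $\mathbf{E}\,\tau^{k}<\infty$ for every $k<K-1$. Applying Markov's inequality, $\mathbf{P}\{\tau>t\}\leqslant \mathbf{E}\,\tau^{k}\,t^{-k}$, and after passing to $(1+t)^{-k}$ and substituting into the coupling inequality we obtain the desired bound with $C(X_0,k)=2\,\mathbf{E}\,\tau^{k}$.

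The main obstacle is exactly this third step. Two independent renewal processes generically never renew simultaneously, so one cannot simply wait for a coincidence; a genuine coupling must be built, exploiting the non-lattice (or spread-out) property of $F$ to manufacture a controlled positive probability of matching residual lifetimes. The delicate part is tracking how the heavy-tailed inter-renewal times accumulate over the random number of attempts, so that the resulting tail carries exactly the polynomial order $K-1$ rather than something weaker, and so that the constant $C(X_0,k)$ can be written down explicitly rather than merely asserted to be finite — which is what the word \emph{strong} in the title demands.
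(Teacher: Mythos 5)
Your overall strategy --- realise $(X_t,\,t\geqslant0)$ together with a stationary copy on one probability space, reduce the coupling of the trajectories to the coupling of the two renewal streams of regeneration epochs, and convert a moment bound on the coupling time $\tau$ into a polynomial rate via Markov's inequality --- is exactly the route the paper takes (the paper itself quotes this Proposition from the literature, but its Theorems~\ref{result} and~\ref{success} are proved in precisely this way). The difficulty is that what you label ``the main obstacle'' is the entire content of the proof, and you leave it open. Concretely: (i) you never construct the coupling. The sentence ``at suitable renewal epochs the residual lifetimes of the two streams can be matched with positive probability'' is the assertion to be proved, and the non-lattice property of $F$ alone does not deliver it: two independent continuous-time renewal processes generically share no renewal epoch, so a \emph{dependent} construction is mandatory. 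The paper builds it under its standing hypothesis $(\ast)$ (a nontrivial absolutely continuous component of $F$) by splitting $F=\Phi+\Psi$ and $\widetilde F=\Phi+\widetilde\Psi$ with $\Phi(s)=\int_0^s \mathbf{1}\left(\exists F'(u)\right)\left(F'(u)\wedge\widetilde F'(u)\right)\mathrm{d}u$, and at each renewal of the non-stationary copy drawing the next intervals of \emph{both} copies from the common component with probability $\varkappa=\Phi(+\infty)>0$; this is what produces the explicit, state-independent per-attempt success probability and the geometric number of attempts. (ii) You infer $\mathbf{E}\,\tau^{k}<\infty$ for $k<K-1$ from the tail estimate $\int_t^\infty(1-F(s))\,\mathrm{d}s=o\left(t^{-(K-1)}\right)$, but no argument connects the integrated tail to the moments of a geometric sum of heavy-tailed inter-renewal times; you flag this accumulation problem as ``delicate'' and then do not carry it out. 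The missing computation is the analogue of the chain of inequalities (\ref{zvlab5}) combined with $\left(\sum_{i=1}^n a_i\right)^k\leqslant n^{k-1}\sum_{i=1}^n a_i^k$ for $a_i\geqslant0$.

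A secondary point: your accounting concedes more than the paper's construction does. Because in the paper the coupling epoch $\widetilde\tau$ is a renewal epoch $\theta_{n+1}$ of the \emph{non-stationary} copy, reached after a geometrically distributed number of periods each distributed as $\zeta_1$, one gets $\mathbf{E}\,\widetilde\tau^{\,k}\leqslant C(X_0,k)<\infty$ for all $k\in[1,K]$ --- no power is surrendered to the equilibrium residual life. Your plan, which charges one power of the tail to the stationary delay, reaches only $k<K-1$; that suffices for the Proposition as stated, but it is strictly weaker than what the explicit construction yields, and without the construction neither exponent is actually established.
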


\begin{prop}
 If $ \mathbf{E} \, e^{\dd \alpha \zeta_i}< \infty$ for some $ \alpha>1$, then for all $a< \alpha$ and $X_0 \in \mathscr{X} $ there exists $ \mathfrak{C} \left(X_0, \alpha\right)$ such that
 $$
 \left \| \mathscr{P} _t^{X_0} - \mathscr{P} \right \|_{TV} \leqslant \frac{\mathfrak{C} \left(X_0,a\right)}{e^{\dd at}}.
 $$
\end{prop}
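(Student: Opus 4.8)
The plan is to prove the bound by the coupling method: I would reduce the total variation distance to the tail of a coupling time, and then use the exponential moment of $\zeta$ to control that tail. First I would set up the coupling inequality. On one common probability space I would build two versions of the regenerative process: a process $\left(X_t\right)$ started from the given state $X_0$, with regeneration epochs $0 \leqslant \theta_0 \leqslant \theta_1 \leqslant \cdots$, and a stationary version $\left(X_t'\right)$ whose law is $\mathscr{P}$ for every $t$, with its own regeneration epochs $\theta_0' \leqslant \theta_1' \leqslant \cdots$ (the first period distributed according to the equilibrium distribution $F_e(s) = \left(\mathbf{E}\zeta\right)^{-1}\int_0^s \left(1-F(u)\right)\,\mathrm{d}u$, the remaining ones i.i.d.\ with law $F$). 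If $T$ denotes a random time after which the two trajectories coincide, i.e.\ $X_t = X_t'$ for all $t \geqslant T$, then the coupling inequality gives $\left\| \mathscr{P}_t^{X_0} - \mathscr{P} \right\|_{TV} \leqslant \mathbf{P}\{T > t\}$, so it suffices to construct such a $T$ and show $\mathbf{P}\{T>t\} \leqslant \mathfrak{C}(X_0,a)\, e^{-at}$.

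Second, I would construct $T$ from the regeneration structure. The defining property of a regenerative process (Definition \ref{zvlab1}) is that after any regeneration epoch the future is independent of the past and has a fixed law; hence if both versions regenerate at one and the same instant, they can be driven by the same randomness thereafter and coalesce from that instant on. The task therefore reduces to producing a common regeneration epoch for the two independent renewal sequences $\{\theta_n\}$ and $\{\theta_m'\}$. Since $F$ is non-lattice, this is the classical renewal-coupling construction: one performs successive matching attempts, each of which succeeds with probability bounded below by some $p>0$, so that the number of attempts before coalescence is dominated by a geometric random variable while each attempt consumes a tight number of regeneration periods. This is exactly where the non-lattice hypothesis is essential; in the purely non-lattice (not absolutely continuous) case one first passes to a convolution $F^{*r}$, or uses an $\varepsilon$-coupling refinement, to obtain the minorization that makes each attempt succeed with positive probability.

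Finally I would estimate the tail of $T$. The construction represents $T$ by a random sum of the form $T \leqslant \theta_0 + \theta_0' + \sum_{i=1}^{\nu} \eta_i$, where $\nu$ is stochastically geometric and the $\eta_i$ are built from finitely many regeneration periods, hence inherit an exponential moment $\mathbf{E}\, e^{a\eta_i} < \infty$ from $\mathbf{E}\, e^{\alpha\zeta} < \infty$. A Chernoff--Cram\'er estimate on such a geometric sum then yields $\mathbf{P}\{T > t\} \leqslant \mathfrak{C}\, e^{-at}$, with the dependence on $X_0$ entering only through the initial delay $\theta_0$, i.e.\ through the initial age of the process. The main obstacle I expect is pushing the exponent through the whole range $a<\alpha$: the crude bound $\mathbf{E}\!\left[\left(\mathbf{E}\, e^{a\eta}\right)^{\nu}\right]$ is finite only when $\mathbf{E}\, e^{a\eta}$ is small relative to the success probability $p$, so recovering \emph{every} $a<\alpha$ requires either boosting $p$ (by allotting more regeneration periods per attempt and optimizing) or a direct renewal-type estimate of the coupling-time tail rather than the naive geometric bound. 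Controlling this interaction between the matching probability and the exponential moment is the delicate point; the remaining steps are routine.
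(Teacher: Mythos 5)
Your plan is essentially the paper's own argument: it couples the process with a stationary version whose first period has the equilibrium law $\widetilde F$, produces a simultaneous regeneration at each attempt with probability $\varkappa>0$ by splitting $F$ and $\widetilde F$ along the common density part $\varphi=F'\wedge\widetilde F'$ (this is where the paper's standing condition $(\ast)$ plays the role of your passage to $F^{*r}$ or $\varepsilon$-coupling), obtains a geometric number of attempts, and bounds $\mathbf{E}\,e^{a\widetilde\tau}$ by a geometric series. The obstacle you flag at the end is real and is not overcome in the paper either: its quantitative Theorem yields the exponential bound only for those $a$ with $\mathfrak{P}_a\left(\zeta_1\right)=\int_0^\infty e^{as}\,\mathrm{d}\Psi\left(s\right)<1$, i.e.\ for \emph{some} $a>0$, while the full range $a<\alpha$ asserted in the Proposition is the qualitative result imported from the cited literature rather than something the explicit-constant construction delivers.
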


Our goal is to find the bounds of the constants $C\left(X_0,k\right)$ and $ \mathfrak{C} \left(X_0,a\right)$ in sufficiently wide conditions; note that the behavior of the constants $C\left(X_0,k\right)$ and $ \mathfrak{C} \left(X_0,a\right)$ has been studied in \cite{Lund,Rob,VerAiT,VerSeva,vzMPRF,v-z2015} for some special cases of regenerative processes.

{ \it In the sequel, we suppose that }
 \begin{flushright}
 \fbox{$ \dd\int \limits_{ \{s: \, \exists F'\left(s\right) \} }^{\;} F'\left(s\right) \, \mathrm{d} \, s>0; \quad \mathbf{E} \, \zeta_i\bd \mu_i< \infty$} \hspace{4.8cm} ($ \ast$)
 \end{flushright}
 \section{Notations and the main results}

\begin{nota}
The times $ \theta_i$ (Definition \ref{zvlab1}) form the renewal process $N_t \stackrel{ \rm{ \;def} } {= \! \! \!=} \sum \limits_{i=0} ^ \infty \mathbf{1} \left(\theta_i \leqslant t\right)$.
Denote $B_t \stackrel{ \rm{ \;def} } {= \! \! \!=} t- \theta_{N_t-1} $ (with $ \theta_{-1} \stackrel{ \rm{ \;def} } {= \! \! \!=} 0$).
The process $\left(B_t,\, t\geqslant0\right)$ is the backward renewal time of the process $N_t$. \hfill\ensuremath{\triangleright}
\end{nota}
 \begin{rem}
The process $\left(B_t,\, t\geqslant0\right)$ is the Markov regenerative process. \hfill\ensuremath{\triangleright}
 \end{rem}
\begin{nota}
For nondecreasing function $F\left(s\right)$ we put $F^{-1} \left(y\right) \stackrel{ \rm{ \;def} } {= \! \! \!=} \inf \{x: \,F\left(x\right) \geqslant y \} $. \hfill\ensuremath{\triangleright}
\end{nota}

\begin{nota}
Denote
$$ \widetilde{F} \left(s\right) \stackrel{ \rm{ \;def} } {= \! \! \!=} \frac{\dd\int \limits_{0} ^s \left(1-F\left(u\right)\right) \, \mathrm{d} \, u}{\mu},
$$
where
$$ \mu \stackrel{ \rm{ \;def} } {= \! \! \!=} \int \limits_{0} ^ \infty u \, \mathrm{d} \, F\left(u\right)= \mathbf{E} \, \zeta.
$$\hfill\ensuremath{\triangleright}
\end{nota}

\begin{nota}
Denote
$F_a\left(s\right) \stackrel {{ \rm \; def} } {= \! \! \!=} \dd\frac{F\left(s+a\right)-F\left(a\right)} {1-F\left(a\right)} $; $\;\; \mu_0 \stackrel{ \rm{ \;def} } {= \! \! \!=} \mathbf{E} \, \zeta_0$. \hfill\ensuremath{\triangleright}
\end{nota}

\begin{nota} Let $\UU, \UU', \UU'', \UU''', \mathscr{U} _i, \mathscr{U} _i', \mathscr{U} _i'', \mathscr{U} _i'''$ be independent uniformly distributed on $[0,1)$ random variables on some probability space $\left(\widetilde{ \Omega} , \widetilde{ \mathscr{F} } , \widetilde{ \mathbf{P} }\right)$. \hfill\ensuremath{\triangleright}
\end{nota}

\begin{nota} Denote
$$
\varphi\left(s\right) \stackrel {{ \rm \; def} } {= \! \! \!=} \mathbf{1} \Big(\exists \, F'\left(s\right) \Big) \times \left (F'\left(s\right) \wedge \widetilde F'\left(s\right) \right)
$$
and
$$ \Phi\left(s\right) \stackrel {{ \rm \; def} } {= \! \! \!=} \int \limits_0^s \varphi\left(u\right) \, \mathrm{d} \, u;
$$
the condition ($ \ast$) implies
$ \varkappa \stackrel {{ \; \rm def} } {= \! \! \!=}\dd \int \limits_0^ \infty \varphi\left(s\right) \, \mathrm{d} \, s= \Phi\left(+ \infty\right)>0$. \hfill\ensuremath{\triangleright}
\end{nota}

\begin{nota} Denote
$$ \Psi\left(s\right) \stackrel {{ \rm \;def} } {= \! \! \!=} F\left(s\right)- \Phi\left(s\right),\qquad \widetilde \Psi\left(s\right) \stackrel {{ \rm \; def} } {= \! \! \!=} \widetilde F\left(s\right)- \Phi\left(s\right);
$$
then $ \Psi\left(+ \infty\right)= \widetilde \Psi\left(+ \infty\right)=1- \varkappa$.

Denote $\mathfrak{P}_a\left(\zeta_1\right)\bd \dd\intl_0^\infty e^{\dd a s}\ud\Psi\left(s\right)$. \hfill\ensuremath{\triangleright}
\end{nota}

\begin{rem} \label{rem1}
Note that $ \varkappa^{-1} \Phi\left(s\right)$ is the distribution function, and if $ \varkappa<1$ then $\left(1- \varkappa\right)^{-1} \Psi\left(s\right)$ and $\left(1- \varkappa\right)^{-1} \widetilde{ \Psi} \left(s\right)$ are the distribution function.

If $ \varkappa=1$, then $ \Phi\left(s\right) \equiv F\left(s\right) \equiv \widetilde{F} \left(s\right)=1-e^{\dd - \lambda s}$, and we put $ \Psi\left(s\right) \equiv \widetilde{ \Psi} \left(s\right) \equiv 0$; in this case we assume $\left(1- \varkappa\right)^{-1} \Psi\left(s\right) \equiv \left(1- \varkappa\right)^{-1} \widetilde{ \Psi} \left(s\right) \equiv 0$, and $ \Psi^{-1} \left(u\right)= \widetilde{ \Psi} ^{-1} \left(u\right)=0$.
\hfill\ensuremath{\triangleright}
\end{rem}

\begin{nota} Put for independent random variables $ \mathscr{U} , \mathscr{U} ', \mathscr{U} ''$ uniformly distributed on $[0,1)$
$$
 \Xi\left(\mathscr{U} , \mathscr{U} ', \mathscr{U} ''\right) \stackrel {{ \rm \; def} } {= \! \! \!=} \mathbf{1} \left(\mathscr{U} < \varkappa\right) \Phi^{-1} \left(\varkappa \mathscr{U} '\right)+ \mathbf{1} \left(\mathscr{U} \geqslant \varkappa\right) \Psi^{-1} \left(\left(1- \varkappa\right) \mathscr{U} ''\right);
$$
$$
 \widetilde \Xi\left(\mathscr{U} , \mathscr{U} ', \mathscr{U} ''\right) \stackrel {{ \; \rm def} } {= \! \! \!=} \mathbf{1} \left(\mathscr{U} < \varkappa\right) \Phi^{-1} \left(\varkappa \mathscr{U} '\right)+ \mathbf{1} \left(\mathscr{U} \geqslant \varkappa\right) \widetilde \Psi^{-1} \left(\left(1- \varkappa\right) \mathscr{U} ''\right).
$$
\hfill\ensuremath{\triangleright}
\end{nota}

\begin{rem}
It is easy to see that
$$
F\left(s\right)= \varkappa \left (\varkappa^{-1} \Phi\left(s\right) \right)+ \left(1- \varkappa\right) \left (\left(1- \varkappa\right)^{-1} \Psi\left(s\right) \right)
$$
and
$$ \widetilde{F} \left(s\right)= \varkappa \left (\varkappa^{-1} \Phi\left(s\right) \right) +\left(1- \varkappa\right) \left (\left(1- \varkappa\right)^{-1} \widetilde{ \Psi} \left(s\right) \right).
$$
Hence,
$$
 \mathbf{P} \{ \Xi\left(\mathscr{U} , \mathscr{U} ', \mathscr{U} ''\right) \leqslant s \} =F\left(s\right),
 \qquad
 \mathbf{P} \{ \widetilde{ \Xi} \left(\mathscr{U} , \mathscr{U} ', \mathscr{U} ''\right) \leqslant s \} = \widetilde{F} \left(s\right),
$$
and
$$
 \mathbf{P} \{ \Xi\left(\mathscr{U} , \mathscr{U} ', \mathscr{U} ''\right)= \widetilde{ \Xi} \left(\mathscr{U} , \mathscr{U} ', \mathscr{U} ''\right) \} = \varkappa.
$$
 \hfill\ensuremath{\triangleright}
\end{rem}

\begin{nota}
Denote
\begin{multline*}
 C\left(X_0,k\right) \stackrel{ \rm{ \;def} } {= \! \! \!=} \mathbf{E} \, \zeta_0^k \varkappa \sum \limits_{n=1} ^ \infty \left (\left(n+1\right)^{k-1} \left(1- \varkappa\right)^{n-1} \right)+\\ \\
 +\mathbf{E} \, \zeta_1^k \sum \limits_{n=1} ^ \infty \left (\left (\varkappa n\left(n+2\right)^{k-1} + \left(n+1\right)^{k-1} \right) \left(1- \varkappa\right)^{n-1} \right)\Big[=\mathscr{C}\left(\zeta_0,k\right)\Big].
\end{multline*}
\hfill\ensuremath{\triangleright}
\end{nota}

\begin{nota} Denote $ \mathfrak{C} \left(X_0,a\right) \stackrel{ \rm{ \;def} } {= \! \! \!=} \dd\frac{ \mathfrak{P}_a\left(\zeta_1\right) \mathbf{E} \, e^{\dd a \zeta_0} } {1-\mathfrak{P}_a\left(\zeta_1\right)} \Big[=\mathcal{C}\left(\zeta_0,k\right)\Big]$. \hfill\ensuremath{\triangleright}
\end{nota}

 \begin{thm} \label{result} ~

\textbf{1. } If $ \mathbf{E} \,\left(\zeta_i\right)^K< \infty$  for some $K \geqslant 1$, then for all $k \in[1,K]$
$$
\left \| \mathscr{P} _t^{X_0} - \mathscr{P} \right \|_{TV} \leqslant 2\dd\frac{ C\left(X_0,k\right)}{t^{ \alpha}}.
$$

\textbf{2. } If $\EE\,e^{\dd \alpha\, \zeta_i}<\infty$, then there exists $a>0$ such that $\mathfrak{P}_a<1$, and
$$
\left \| \mathscr{P} _t^{X_0} - \mathscr{P} \right \|_{TV} \leqslant 2 \frac{\mathfrak{C} \left(X_0,a\right)}{e^{\dd a t}} .
$$
\end{thm}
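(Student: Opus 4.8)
The plan is to bound the total variation distance by a coupling time and then estimate the tail of that coupling time from the moment hypotheses. By the coupling inequality, if we build on $(\widetilde\Omega,\widetilde{\mathscr F},\widetilde{\mathbf P})$ a pair of trajectories, one distributed as $\mathscr P_t^{X_0}$ and one stationary (distributed as $\mathscr P$), together with a random time $\tau$ after which the two trajectories coincide, then $\left\|\mathscr P_t^{X_0}-\mathscr P\right\|_{TV}\leqslant 2\,\mathbf P\{\tau>t\}$. Thus both assertions reduce to tail bounds for $\tau$: a polynomial tail in Part~1 and an exponential tail in Part~2.

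First I would construct the coupling. The stationary version of the process is the delayed regenerative process whose first regeneration interval has the equilibrium law $\widetilde F$ and whose subsequent intervals are i.i.d.\ with law $F$; the process started from $X_0$ has initial (delay) interval $\zeta_0$ with law $\mathbb F$ and subsequent intervals i.i.d.\ with law $F$. The variables $\Xi,\widetilde\Xi$ are precisely the device that couples one $F$-interval to one $\widetilde F$-interval so that they coincide with probability $\varkappa$ (the common mass of the two densities, positive by $(\ast)$). Running the two processes and, at each regeneration cycle, drawing the paired intervals from a fresh triple $(\mathscr U_i,\mathscr U_i',\mathscr U_i'')$, the event $\{\mathscr U_i<\varkappa\}$ forces a common regeneration epoch; on the complementary event the two intervals are drawn from the residual laws governed by $\Psi$ and $\widetilde\Psi$, and the attempt is repeated. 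Hence the number of failed attempts before synchronization is geometric with success probability $\varkappa$, and after synchronization the two trajectories can be fed identical intervals and coincide forever; this defines $\tau$.

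With this construction $\tau$ is, up to the initial delay $\zeta_0$, a sum of a geometric number (parameter $\varkappa$) of regeneration intervals. For Part~1 I would condition on the number $n$ of attempts, bound the conditional $k$-th moment of a sum of $N$ intervals by $N^{k-1}\sum_i\zeta_i^k$ (the power-mean inequality, valid for $k\geqslant1$), and take expectations; summing the resulting $\bigl((n+1)^{k-1},\,n(n+2)^{k-1}\bigr)$ contributions against the geometric weights $\varkappa(1-\varkappa)^{n-1}$ produces exactly $\mathbf E\,\tau^k\leqslant C(X_0,k)$, after which Markov's inequality $\mathbf P\{\tau>t\}\leqslant \mathbf E\,\tau^k/t^{k}$ yields the stated bound. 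For Part~2 I would instead estimate the exponential moment: each failed attempt contributes the factor $\mathfrak P_a(\zeta_1)=\int_0^\infty e^{\dd as}\,\mathrm d\Psi(s)$, the initial delay contributes $\mathbf E\,e^{\dd a\zeta_0}$, and summing the geometric series gives $\mathbf E\,e^{\dd a\tau}\leqslant \mathfrak P_a(\zeta_1)\,\mathbf E\,e^{\dd a\zeta_0}/\left(1-\mathfrak P_a(\zeta_1)\right)=\mathfrak C(X_0,a)$. Since $\mathfrak P_a(\zeta_1)\to\Psi(+\infty)=1-\varkappa<1$ as $a\downarrow0$ and $\mathfrak P_a$ is continuous in $a$, there is $a>0$ with $\mathfrak P_a<1$, and the exponential Chebyshev (Chernoff) inequality $\mathbf P\{\tau>t\}\leqslant e^{\dd-at}\,\mathbf E\,e^{\dd a\tau}$ closes the argument.

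The delicate point is the coupling construction itself: one must verify that the paired intervals $(\Xi_i,\widetilde\Xi_i)$ really assemble into two genuine regenerative trajectories with the correct marginal laws $\mathscr P_t^{X_0}$ and $\mathscr P$, that synchronization of the embedded renewal epochs does propagate to coincidence of the full state processes $X_t$, and that the resulting $\tau$ has precisely the geometric-sum law used above. Everything after that --- the power-mean moment estimate and the geometric summation matching the explicit constants $C(X_0,k)$ and $\mathfrak C(X_0,a)$ --- is routine bookkeeping.
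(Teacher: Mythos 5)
Your proposal follows essentially the same route as the paper: reduce to a tail bound on the coupling time via the coupling inequality, build the successful coupling by splitting $F$ and $\widetilde F$ into the common part $\Phi$ (coincidence probability $\varkappa$) and residuals $\Psi,\widetilde\Psi$ so that the number of attempts is geometric, then apply the power-mean (Jensen) inequality with Markov's inequality for the polynomial case and the geometric series in $\mathfrak P_a(\zeta_1)$ with the exponential Chebyshev inequality for the exponential case. The construction you defer as ``the delicate point'' is exactly the content of the paper's Theorem~\ref{success}, and your observation that $\mathfrak P_a(\zeta_1)\to 1-\varkappa<1$ as $a\downarrow 0$ correctly supplies the existence of a suitable $a>0$ asserted in Part~2.
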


\section{Idea of the proof of the Theorem \ref{result}.}

\subsection{Coupling method (see \cite{Lindvall-Lec}).} To prove the Theorem, we will use the \emph{modified coupling method.} The coupling method invented by W.~Doeblin in \cite{Doeb} is used to obtain the bounds of the rate of convergence of a Markov process to the stationary regime.

Let $\left(X_t',\,t\geqslant 0\right)$ and $\left(X_t'',\,t\geqslant 0\right)$ be two versions of Markov process $ \left(X_t,\,t\geqslant 0\right)$ with different initial states $x_0'$ and $X_0''$,
correspondingly, and put
$$ \mathscr{P} ^{X_0'} _t\left(A\right) \stackrel{ \rm{ \;def} } {= \! \! \!=} \mathbf{P} \{X_t' \in A \} ,\qquad \mathscr{P} ^{X_0''} _t\left(A\right) \stackrel{ \rm{ \;def} } {= \! \! \!=} \mathbf{P} \{X_t'' \in A \} ,
$$ and
$$
 \tau \left (X_0',X_0'' \right) \stackrel{ \rm{ \; def} } {= \! \! \!= \! \! \!=} \inf \left \{ t>0: \,X_t'=X_t'' \right \}.
$$

Let we know that $\PPP^{X_0}_t\Longrightarrow \PPP$ for every initial state $X_0$.

We suppose that $ \mathbf{E} \, \varphi \left (\tau \left (X_0',X_0'' \right) \right)=$ $C \left (X_0',X_0'' \right)< \infty$ for some positive increasing function $ \varphi\left(t\right)$.
Then
$$
 \left| \mathscr{P} _t^{X_0'} \left( A\right)- \mathscr{P} _t^{X_0''} \left( A\right) \right| \leqslant \mathbf P \left \{ \tau \left (X_0',X_0'' \right) >t \right \}
 =\mathbf P \left \{ \varphi \left (\tau \left (X_0',X_0'' \right) \right) > \varphi\left(t\right) \right \} \leqslant
 \frac{ \mathbf{E} \, \varphi \left (\tau \left (X_0',X_0'' \right) \right)}{\varphi\left(t\right)}
$$
by the coupling inequality and Markov inequality.

By integration of this inequality with respect to the stationary measure $ \mathscr{P} $ we have
\begin{equation} \label{zvlab2}
 \left| \mathscr{P} _t^{X_0'} \left( A\right)- \mathscr{P} \left(A\right) \right| \leqslant \left(\varphi\left(t\right)\right)^{-1} \int \limits_{ \mathscr{X} } \varphi \left (\tau \left (X_0',X_0'' \right) \right) \, \mathrm{d} \mathscr{P} \left (X_0'' \right)= \frac{\widehat C \left (X_0' \right)}{\varphi\left(t\right)},
\end{equation}
and
$$
 \left \| \mathscr{P} _t^{X_0'} - \mathscr{P} \right \|_{TV} \leqslant 2 \frac{\widehat{C} \left (X_0' \right)}{\varphi\left(t\right)}.
$$

Emphasize that the application of the coupling method is possible only for the Markov processes.
However, in queuing theory, usually the regenerative queueing processes are not Markov. Therefore, the state space of considered regenerative process must be extended so that the regenerative process with this state space would become Markov.

So, for the use of the coupling method for the arbitrary regenerative process $\left (X_t,\,t\geqslant 0\right)$ we must extend the state space $ \mathscr{X} $ of this process by such a way that the extended process $\left(\overline{X}_t,\,t\geqslant 0\right)$ with this extended state space $ \overline{ \mathscr{X} } $ is Markov.
For markovization of non-Markov regenerative process we can include to the state $X_t$, $t \in[ \theta_{n-1} , \theta_n)$ full history of the process on the time interval $[ \theta_{n-1} ,t]$: the extended process with the states $ \overline X_t \stackrel {{ \rm \; def} } {= \! \! \!=} \left \{ X_s, s \in[ \theta_{n-1} ,t] |t< \theta_n \right \} $ is Markov and regenerative with the extended state space $ \overline{ \mathscr{X} } $.

Denote $ \overline{ \mathscr{P} } _t^{ \overline{X} _0} \left(A\right) \stackrel{ \rm{ \;def} } {= \! \! \!=} \mathbf{P} \{ \overline{X} _t \in A \} $ for the process $\left( \overline{X} _t,\,t\geqslant 0\right)$ with initial state $ \overline{X} _0$ and $A \in \mathscr{B} \left(\overline{ \mathscr{X} }\right)$.

If $ \mathbf{E} \, \zeta_i< \infty$ then $ \overline{ \mathscr{P} } _t^{ \overline{X} _0} \Longrightarrow \overline{ \mathscr{P} } $.

If we can prove that $ \left \| \overline{ \mathscr{P} } _t^{ \overline{X} _0} - \overline{ \mathscr{P} } \right \|_{TV} \leqslant \varphi\left(t, \overline{X} _0\right)$ for all $t \geqslant 0$, then this inequality is true for the original non-Markov regenerative process $(X_t,\,t\geqslant 0)$.

For simplicity, we assume that the process $\left( \overline{X} _t,\,t\geqslant 0\right)$ is homogeneous Markov process, i.e. the transition function of this process in the period $[0, \theta_0]$ is the same as in the periods $[ \theta_i, \theta_{i+1} ]$, $i\geqslant1$.

Thus, in the sequel we suppose that the regenerative process $X_t$ is homogeneous Markov process.

Notice, that in general case $ \mathbf{P} \left \{ \tau \left (X_0',X_0'' \right)< \infty \right \} <1$ (for the Markov processes in continuous time), and the ``direct'' use of coupling method is impossible.

 \subsubsection{Successful coupling (see \cite{Grif}).} So, we will construct (in a special probability space) the paired stochastic process $ \mathscr{Z} _t= \left (\left (Z_t',Z_t'' \right),t \geqslant0 \right)$ such that:
\begin{enumerate}
 \item For all $t \geqslant 0 \;$ $ \;X_t' \stackrel{ \mathscr{D} } {=} Z_t'$ and $X_t'' \stackrel{ \mathscr{D} } {=} Z_t''$.
 \item $ \mathbf{E} \, \tau \left (Z_0',Z_0'' \right)< \infty$, where
$
 \tau \left (Z_0',Z_0'' \right)={ \tau} \left(\mathscr Z_0 \right) \stackrel{ \rm{def} } {= \! \! \!= \! \! \!=} \inf \left \{ t \geqslant 0: \,Z_t'=Z_t'' \right \}.
$
 \item $Z_t'=Z_t''$ for all $t \geqslant { \tau} \left (Z_0',Z_0'' \right)$.
\end{enumerate}

The paired stochastic process $ \left(\mathscr{Z} _t,\,t\geqslant 0\right)= \big (\left (Z_t',Z_t'' \right),t \geqslant0 \big)$ satisfying the conditions 1--3 is called {\it successful coupling}.
\begin{rem}
The processes $\left(Z_t',\,t\geqslant 0\right)$ and $\left(Z_t'',\,t\geqslant 0\right)$ can be non-Markov, and its finite-dimensional distributions may differ from the finite-dimensional distributions of the processes $\left(X_t',\,t\geqslant 0\right)$\linebreak and $\left(X_t'',\,t\geqslant 0\right)$ respectively.
Furthermore, the processes $\left(Z_t',\,t\geqslant 0\right)$ and $\left(Z_t',\,t\geqslant 0\right)$ may be dependent. \hfill\ensuremath{\triangleright}

\end{rem}

For all $A \in \mathscr{B} \left(\mathscr{X}\right)$ we can use the coupling inequality in the form:
 \begin{multline} \label{zvlab3}
 \left| \mathscr{P} ^{X_0'} _t\left(A\right) - \mathscr{P} _t^{X_0''} \left(A\right) \right|= \left| \mathbf{P} \left \{ X_t' \in A \right \} - \mathbf{P} \left \{ X_t'' \in A \right \} \right|=
 \\ \\
 = \left| \mathbf{P} \left \{ Z_t' \in A \right \} - \mathbf{P} \left \{ Z_t'' \in A \right \} \right| \leqslant
 \mathbf{P} \left \{ { \tau} \left (Z_0',Z_0'' \right) \geqslant t \right \}
 \leqslant \\ \\
 \leqslant \frac{\mathbf {E} \, \varphi \left ({ \tau} \left (Z_0',Z_0'' \right) \right)}{\varphi\left(t\right)} \leqslant \frac{C \left (Z_0',Z_0'' \right)}{\varphi\left(t\right)}
 \end{multline}
for some constant $C \left (Z_0',Z_0'' \right)$.
As $Z_0^{\left(i\right)} =X_0^{\left(i\right)} $, the right-hand side of the inequality depends only on $X_0^{\left(i\right)} $.
Then we can integrate the inequality (\ref{zvlab3}) with respect to the measure $ \mathscr{P} $ as in (\ref{zvlab2}):
\begin{equation}\label{eq}
 \left| \mathscr{P} ^{X_0'} _t\left(A\right) - \mathscr{P} \left(A\right) \right| \leqslant\frac{\dd \int \limits_{ \mathscr{X} } C \left (Z_0',Z_0'' \right) \mathscr{P} \left (\, \mathrm{d} Z_0'' \right)}{\varphi\left(t\right)}=\frac{ \widehat{C} \left ( Z_0' \right)}{\varphi \left(t\right)}.
\end{equation}
The inequality (\ref{eq}) implies the bounds for $ \left\| \mathscr{P} ^{X_0'} _t - \mathscr{P} \right\|_{TV}$.

{ \it However, the integration in (\ref{eq}) gives some trouble.}

 \subsection{Stationary modification of the coupling method.} We will construct a successful coupling $ \left(\mathscr Z_t,\, t\geqslant0\right)=\left(Z_t, \widetilde Z_t\right)$ for the process $\left(X_t,\, t\geqslant0\right)$ and its stationary version $ \left(\widetilde X_t,\, t\geqslant0\right)$, then we will estimate the random variable
 $$  \tau\left(X_0\right)=  \tau\left(Z_0\right) \stackrel {{ \; \rm def} } {= \! \! \!=} \inf \left \{ t>0: \, Z_t= \widetilde Z_t \right \} .$$
Then
$$ \left | \mathscr{P} ^{X_0} _t\left(A\right) - \mathscr{P} \left(A\right) \right | \leqslant \mathbf{P} \left \{  \tau\left(X_0\right)>t \right \} \leqslant \frac{ \mathbf{E} \varphi\left( \tau\left(X_0\right)\right)} { \varphi\left(t\right)}
$$
for all $A\in\mathscr{B}\left(\mathscr{X}\right)$ by Markov inequality, and
$$
\left \| \mathscr{P} ^{X_0} _t - \mathscr{P} \right \|_{TV}\bd 2 \supl_{A\in\mathscr{B}\left(\mathscr{X}\right)}\left | \mathscr{P} ^{X_0} _t\left(A\right) - \mathscr{P} \left(A\right) \right | \leqslant 2 \frac{ \mathbf{E} \varphi\left( \tau\left(X_0\right)\right)} { \varphi\left(t\right)}.
$$
Here $ \mathscr{P} ^{X_0} _t$ is the distribution of process $\left(X_t,\, t\geqslant0\right)$ with the initial state $X_0$.

Thus, the first step of our proof is the proof of the Theorem \ref{success}; the Theorem \ref{result} is the consequence of the proof of the Theorem \ref{success}.

 \section{Implementation of idea.}
 \begin{thm} \label{success}
There exists a successful coupling $ \left(\mathscr Z_t,\, t\geqslant0\right)=\left(\left(Z_t, \widetilde Z_t\right),\, t\geqslant0\right)$ for the process $\left(X_t,\, t\geqslant0\right)$ and its stationary version $\left( \widetilde X_t,\, t\geqslant0\right)$.
 \end{thm}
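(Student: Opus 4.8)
The plan is to build the paired process $\left(\mathscr{Z}_t,\,t\geqslant0\right)=\left(\left(Z_t,\widetilde Z_t\right),\,t\geqslant0\right)$ cycle by cycle on the common space $\left(\widetilde\Omega,\widetilde{\mathscr F},\widetilde{\mathbf P}\right)$, driving it by the independent uniform variables $\mathscr U_i,\mathscr U_i',\mathscr U_i'',\mathscr U_i'''$, so that the first marginal is a copy of $\left(X_t\right)$ started at $X_0$ and the second is a copy of the stationary version $\left(\widetilde X_t\right)$. The component $Z$ runs through regeneration cycles whose lengths are i.i.d. with law $F$ (after the initial delay cycle of law $\mathbb F$), while $\widetilde Z$ is the stationary regenerative process, for which the forward recurrence time measured at any epoch chosen independently of $\widetilde Z$ has exactly the law $\widetilde F$. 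The whole construction hinges on the coupled pair $\left(\Xi,\widetilde\Xi\right)$: by the Remark following its definition, $\Xi\sim F$, $\widetilde\Xi\sim\widetilde F$, and $\mathbf P\{\Xi=\widetilde\Xi\}=\varkappa$, while condition $(\ast)$ guarantees $\varkappa=\Phi(+\infty)>0$.

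First I would dispose of the \emph{initial cycles}: let $Z$ complete its delay cycle of length $\zeta_0\sim\mathbb F$ and let $\widetilde Z$ complete its first (residual) cycle of length $\widetilde\zeta_0\sim\widetilde F$, each driven so that the respective marginal law is correct and so that $\zeta_0$ is independent of $\widetilde Z$; this contributes the factor $\mathbf E\,\zeta_0^k$ (resp. $\mathbf E\,e^{\dd a\zeta_0}$) isolated in $C(X_0,k)$ and $\mathfrak C(X_0,a)$. After this, both components sit at genuine regeneration epochs and from now on all their cycles are i.i.d. with law $F$.

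Then comes the sequence of \emph{coupling attempts}, which is the heart of the argument. At a regeneration epoch $a$ of $Z$ at which the forward recurrence time of the stationary component $\widetilde Z$ is, by stationarity, $\widetilde F$-distributed and independent of the fresh $Z$-cycle, I would generate the next $Z$-cycle as $\Xi\left(\mathscr U,\mathscr U',\mathscr U''\right)$ and realise the residual of $\widetilde Z$ as $\widetilde\Xi\left(\mathscr U,\mathscr U',\mathscr U''\right)$ using the same triple of uniforms. On the event $\{\mathscr U<\varkappa\}$ (probability $\varkappa$) the two lengths coincide, so both components regenerate at the common epoch $a+\Xi$; I then set $\tau$ equal to this epoch and glue the trajectories, driving $Z$ and $\widetilde Z$ by identical randomness for $t\geqslant\tau$, which forces $Z_t=\widetilde Z_t$ (condition 3). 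On the complementary event the attempt fails, the two components separate, and after a re-alignment phase — running both processes until a fresh $Z$-regeneration is again paired with a $\widetilde F$-distributed, independent residual of $\widetilde Z$ — a new attempt is launched with a fresh block of uniforms. Since the attempts succeed independently with probability $\varkappa>0$, the number $N$ of attempts is geometric, $\mathbf P\{N=n\}=\varkappa(1-\varkappa)^{n-1}$, and each attempt together with its re-alignment consumes only $O(n)$ regeneration cycles of finite mean $\mu$; this is precisely the bookkeeping I expect to reproduce the sums $\suml_n(\cdots)(1-\varkappa)^{n-1}$ appearing in $C(X_0,k)$ and the geometric series $\mathfrak P_a/(1-\mathfrak P_a)$ in $\mathfrak C(X_0,a)$.

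Finally I would verify the three defining properties of a successful coupling. Property 1 (correct marginals) follows because on each cycle the marginal law of $Z$ is $F$ and that of $\widetilde Z$ is $\widetilde F$ by the mixture representations $F=\varkappa\left(\varkappa^{-1}\Phi\right)+\left(1-\varkappa\right)\left(\left(1-\varkappa\right)^{-1}\Psi\right)$ and $\widetilde F=\varkappa\left(\varkappa^{-1}\Phi\right)+\left(1-\varkappa\right)\left(\left(1-\varkappa\right)^{-1}\widetilde\Psi\right)$ of the Remark, and because gluing after $\tau$ preserves these laws by the strong Markov (regenerative) property. Property 3 holds by the gluing, and Property 2, $\mathbf E\,\tau<\infty$, follows from the geometric tail of $N$ together with the finiteness of the cycle moments. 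The main obstacle is Property 2 combined with the independence bookkeeping inside the attempts: after conditioning on a failed attempt the two residual cycles are dependent (they share the uniform $\mathscr U''$ through the different inverses $\Psi^{-1}$ and $\widetilde\Psi^{-1}$), so I must argue, using the regeneration of $\widetilde Z$ and a fresh block of uniforms for each attempt, that the next attempt again matches with probability exactly $\varkappa$ independently of the past, and that the displacement accumulated by repeated failures grows only linearly in the attempt index — this linear control is exactly what makes $\mathbf E\,\tau$, and the stronger functionals $\mathbf E\,\varphi\left(\tau\right)$ needed to deduce Theorem \ref{result}, finite.
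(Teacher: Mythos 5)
Your proposal is correct and follows essentially the same route as the paper: at each regeneration epoch of $Z$ you jointly re-generate the next $Z$-cycle and the $\widetilde F$-distributed residual of $\widetilde Z$ via the maximal-coupling pair $\left(\Xi,\widetilde\Xi\right)$, so each attempt succeeds with probability $\varkappa>0$ and the coupling time is dominated by a geometric number of cycles of finite mean. The paper merely organizes this as an induction over the merged renewal epochs $\vartheta_n$ (with the three cases $\theta_i=\widetilde\theta_j$, $\widetilde\theta_j<\theta_i$, $\theta_i<\widetilde\theta_j$) and then lifts the renewal-skeleton coupling to the full regenerative process via the conditional kernels $\mathscr{G}_a$ and $\mathscr{H}_a$, which is exactly your ``gluing'' step.
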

 \begin{proof} The proof of the Theorem \ref{success} consists of 4 steps.

~

 \noindent \textbf{1.} Let us prove the Theorem \ref{success} for the backward renewal process $\left(B_t,\, t\geqslant 0\right)$ (Notation 1).
We will give the construction of the successful coupling for the process $\left(B_t,\, t\geqslant 0\right)$ and its stationary version $ \left(\widetilde{B} _t,\, t\geqslant 0\right)$ (on the probability space $ \left(\widetilde{ \Omega} , \widetilde{ \mathscr{F} } , \widetilde{ \mathbf{P} }\right)$ in which the random variables $\UU_i$, $\UU_i'$, $\UU_i''$, $\UU_i'''$ are defined -- see Notation 5).

I.e. we will find the uniform bounds for the convergence
$$
\PP\{B_t\leqslant s\}\longrightarrow \widetilde{F}\left(s\right)
$$
as $t\longrightarrow \infty$.

\begin{center}
 \begin{figure}
 \centering
 \begin{picture}(350,40)
\put(-20,20){\vector(1,0){350}}
\thicklines
{\qbezier(-20,33)(20,34)(60,20)}
{\qbezier(60,20)(80,40)(100,20)}
\qbezier(100,20)(140,40)(180,20)
\qbezier(180,20)(200,40)(220,20)
\qbezier(220,20)(250,40)(280,20)
\qbezier(280,20)(290,40)(300,20)
\qbezier(300,20)(320,31)(330,30)
\thinlines
\put(5,35){\scriptsize$\zeta_0\sim F_a$}
\put(65,35){\scriptsize$\zeta_1\sim F$}
\put(125,35){\scriptsize$\zeta_2\sim F$}
\put(185,35){\scriptsize$\zeta_3\sim F$}
\put(235,35){\scriptsize$\zeta_4\sim F$}
\put(210,10){\scriptsize$\theta_3$}
\put(58,10){\scriptsize$\theta_0$}
\put(98,10){\scriptsize$\theta_1$}
\put(178,10){\scriptsize$\theta_2$}
\put(278,10){\scriptsize$\theta_4$}
\put(298,10){\scriptsize$\theta_5$}
\put(-20,10){\scriptsize$0$}
\put(320,10){\scriptsize${B}_t$}
\end{picture}
\begin{picture}(350,45)
\put(-20,20){\vector(1,0){350}}
\thicklines
{\qbezier(-20,33)(10,34)(40,20)}
{\qbezier(40,20)(60,40)(80,20)}
\qbezier(80,20)(120,40)(160,20)
\qbezier(160,20)(200,40)(240,20)
\qbezier(240,20)(260,40)(280,20)
\qbezier(280,20)(320,31)(330,30)
\thinlines
\put(0,35){\scriptsize$\widetilde \zeta_0\sim\widetilde F$}
\put(45,35){\scriptsize$\zeta_1\sim F$}
\put(105,35){\scriptsize$\zeta_2\sim F$}
\put(185,35){\scriptsize$\zeta_3\sim F$}
\put(255,35){\scriptsize$\zeta_4\sim F$}
\put(238,10){\scriptsize$\widetilde \theta_3$}
\put(38,10){\scriptsize$\widetilde \theta_0$}
\put(78,10){\scriptsize$\widetilde \theta_1$}
\put(158,10){\scriptsize$\widetilde \theta_2$}
\put(278,10){\scriptsize$\widetilde \theta_4$}
\put(-20,10){\scriptsize$0$}
\put(320,10){\scriptsize$\widetilde{B}_t$}
\end{picture}
 \caption{Construction of the independent processes $\left(B_t,\, t\geqslant 0\right)$ and $ \left(\widetilde{B} _t,\, t\geqslant 0\right)$}\label{inde}
 \end{figure}
\end{center}
Recall that the processes $B_t$ and $ \widetilde{B} _t$ can be constructed as follows (see Fig. \ref{inde}):

$ \zeta_0 \stackrel{ \rm{ \;def} } {= \! \! \!=} \mathbb{F} ^{-1} \left(\mathscr{U} _0\right)$, and $ \zeta_i \stackrel{ \rm{ \;def} } {= \! \! \!=} F^{-1} \left(\mathscr{U} _i\right)$ for $i>0$; $ \theta_i \stackrel{ \rm{ \;def} } {= \! \! \!=} \sum \limits_{j=0} ^i \zeta_j$; $Z_t \stackrel{ \rm{ \;def} } {= \! \! \!=} t- \max \{ \theta_i: \, \theta_i \leqslant t \} \stackrel{ \mathscr{D} } {=} B_t$.

$ \widetilde{ \theta} _0= \widetilde \zeta_0 \stackrel{ \rm{ \;def} } {= \! \! \!=} \widetilde F^{-1} \left(\mathscr{U} _1'\right)$, and $ \widetilde{ \zeta} _i \stackrel{ \rm{ \;def} } {= \! \! \!=} F^{-1} \left(\mathscr{U} _i'\right)$ for $i>0$; $ \widetilde{ \theta} _i \stackrel{ \rm{ \;def} } {= \! \! \!=} \sum \limits_{j=0} ^i \widetilde{ \zeta} _j$;
 $ \widetilde Z_0 \stackrel{ \rm{ \;def} } {= \! \! \!=} F^{-1} _{ \widetilde \theta_0} \left(\mathscr{U} _1''\right)$;

 $ \widetilde Z_t \stackrel{ \rm{ \;def} } {= \! \! \!=} \mathbf{1} \left(t< \widetilde \theta_0\right)\left(t+ \widetilde Z_0\right)+ \mathbf{1} \left(t \geqslant \widetilde \theta_0\right)\left(t- \max \{ \widetilde \theta_n: \, \widetilde \theta_n \leqslant t \}\right) \stackrel{ \mathscr{D} } {=} \widetilde B_t$.
 \begin{rem}\label{rem5}
 $\mathbf{P} \{ \widetilde Z_0 \leqslant s \} =$
\begin{multline*}
  =\int \limits_0^ \infty F_{u} \left(s\right) \, \mathrm{d} \widetilde{F} \left(u\right)= \int \limits_0^ \infty \frac{F\left(s+u\right)-F\left(u\right)} {1-F\left(u\right)} \times \frac{1-F\left(u\right)}{\mu} \, \mathrm{d} u =\frac{\dd\int \limits_0^s \left(1-F\left(u\right)\right) \, \mathrm{d} u}{\mu}= \widetilde{F} \left(s\right).
\end{multline*}
 \hfill\ensuremath{\triangleright}
 \end{rem}
This construction is the construction of independent version of the processes $\left(B_t,\, t\geqslant 0\right)$ and $ \left(\widetilde{B} _t,\, t\geqslant 0\right)$.
Now we will transform this construction.

~

 \noindent \textbf{2.}
To construct a pair of (dependent) renewal processes, it is enough to construct all renewal times of both processes (times $ \vartheta_i$ in the Fig.1).
We will construct this pair $ \left(\mathscr Z_t,\, t\geqslant 0\right)=\left(\left(Z_t, \widetilde{Z} _t\right),\, t\geqslant 0\right)$ by induction.
 \\
 \noindent { \it Basis of induction.}

 Put
$$
 \theta_0 \stackrel {{ \rm \; def} } {= \! \! \!=} \mathbb{F} ^{-1} \left(\mathscr{U} _{0}\right), \qquad \widetilde \theta_0 \stackrel {{ \rm \; def} } {= \! \! \!=} \widetilde F^{-1} \left(\mathscr{U} _{0} '\right),\qquad \widetilde Z_0 \stackrel {{ \rm \; def} } {= \! \! \!=} F_{ \widetilde \theta_0} ^{-1} \left(\mathscr{U} _{0} ''\right)
$$
(recall that $\PP\{\widetilde{Z}_0\leqslant s\}=\widetilde{F}\left(s\right)$ -- see Remark \ref{rem5}); and we put
$$Z_t \stackrel {{ \rm \; def} } {= \! \! \!=} t,\qquad \widetilde Z_t \stackrel {{ \rm \; def} } {= \! \! \!=} t+ \widetilde Z_0$$
for $t \in[0, \vartheta_0)$ where $ \vartheta_0 \stackrel {{ \rm \; def} } {= \! \! \!=} t_0 \wedge \widetilde t_0$ (in the Fig.1 $ \vartheta_0= \widetilde{ \theta} _0$).

 \noindent { \it Inductive step.}

 Suppose that we have constructed the process $ \left(\mathscr Z_t,\, t\geqslant 0\right)$ for $t \in [0, \vartheta_n)$, where $ \vartheta_n= \theta_i \wedge \widetilde \theta_j$.

There are three alternatives.

 { \it 1.} $ \vartheta_n= \theta_i= \widetilde \theta_j$ -- in the Fig. \ref{suc} this situation occurs for the first time at the point $ \vartheta_5$.

In this case we put
$$Z_{ \vartheta_n} = \widetilde Z_{ \vartheta_n} = 0, \qquad \theta_{i+1} = \widetilde \theta_{j+1} = \vartheta_{n+1} =F^{-1} \left(\mathscr{U} _{n+1}\right)+ \vartheta_{n} ;
$$
and $Z_{t} = \widetilde Z_{t} \stackrel {{ \rm \; def} } {= \! \! \!=} t- \vartheta_n$ for $t \in [ \vartheta_n, \vartheta_{n+1})$.
After the first coincidence (time $ \widetilde{ \tau} $ in the Fig. \ref{suc}) the processes $\left(Z_t,\, t\geqslant 0\right)$ and $ \left(\widetilde{Z} _t,\, t\geqslant 0\right)$ are identical.

\noindent { \it 2.} $ \vartheta_n= \widetilde \theta_j< \theta_i$ (the times $ \widetilde{ \theta} _0$ and $ \widetilde{ \theta} _3$ in the Fig. \ref{suc}).

In this case we put
$$
 \widetilde Z_{ \vartheta_n} = 0, \; \; Z_{ \vartheta_n} = Z_{ \vartheta_n-0} ,\qquad \widetilde \theta_{j+1} \stackrel {{ \rm \; def} } {= \! \! \!=} \widetilde \theta_j+F^{-1} \left(\mathscr{U} _{n+1}\right);
$$
and $ \widetilde Z_{t} \stackrel {{ \rm \; def} } {= \! \! \!=} t- \vartheta_n$, $ Z_{t} \stackrel {{ \rm \; def} } {= \! \! \!=} t- \vartheta_n+Z_{ \vartheta_n} $ for $t \in[ \vartheta_n, \vartheta_{n+1})$ where $ \vartheta_{n+1} \stackrel {{ \rm \; def} } {= \! \! \!=} \theta_i \wedge \widetilde \theta_{j+1} $.

\noindent { \it 3.} $ \vartheta_n= \theta_i< \widetilde \theta_j$ (the times $ \theta_0$, $ \theta_1$ and $ \theta_2$ in the Fig. \ref{suc}).

In this case we put
$$
 \theta_{i+1} \stackrel {{ \rm \; def} } {= \! \! \!=} \theta_i+ \Xi\left(\mathscr{U} _{n+1} , \mathscr{U} _{n+1} ', \mathscr{U} _{n+1} ''\right); \qquad \widetilde \theta_j \stackrel {{ \rm \; def} } {= \! \! \!=} \theta_i+ \widetilde A,
$$
 where $ \widetilde A = \widetilde \Xi\left(\mathscr{U} _{n+1} , \mathscr{U} _{n+1} ', \mathscr{U} _{n+1} ''\right)$;
 and
$$
Z_t \stackrel {{ \rm \; def} } {= \! \! \!=} t- \vartheta_n, \qquad \widetilde Z_t \stackrel {{ \rm \; def} } {= \! \! \!=} t- \vartheta_n + F^{-1} _{ \widetilde A} \left(\mathscr{U} _{n+1} '''\right)
$$
for $t \in[ \vartheta_n, \vartheta_{n+1})$, where $ \vartheta_{n+1} \stackrel {{ \rm \; def} } {= \! \! \!=} \theta_i \wedge \widetilde \theta_{j+1} $.

~

\noindent \textbf{3.} Let us prove that \emph{the process $ \left(\mathscr Z_t,\,t\geqslant0\right)=\left(\left(Z_t, \widetilde Z_t\right),\,t\geqslant0\right)$ is a successful coupling for the processes } $\left(X_t,\,t\geqslant0\right)$ and $\left(\widetilde{X}_t,\,t\geqslant0\right)$, and
$$ \mathbf{E} \, { \tau} \left(X_0\right) \leqslant\mathbf{E} \, \widetilde{ \tau} \left(X_0\right) \leqslant \mathbf {E} \, \zeta_0 + 2 \varkappa^{-1} \mathbf {E} \, \zeta_1< \infty.
$$

Denote
$$ \mathscr{E} _n \stackrel{ \rm{ \;def} } {= \! \! \!=} \{ Z_{ \theta_{n} } = \widetilde{Z} _{ \theta_{n} } |Z_{ \theta_{n-1} } \neq \widetilde{Z} _{ \theta_{n-1} } \} ; \qquad \mathbf{P} \left(\mathscr{E} _n\right)= \varkappa.
$$
According to our construction of the pair $ \left(\mathscr{Z} _t\,t\geqslant0\right)$, we have $ \mathbf{P} \{Z_{ \theta_0} \neq \widetilde{Z} _{ \theta_0} \} =1$ because the distribution $ \widetilde{F} \left(s\right)$ is absolutely continuous.

Then
$$ \mathbf{P} \{Z_{ \theta_1} = \widetilde{Z} _{ \theta_1} \} = \mathbf{P} \left(\mathscr{E} _1\right)= \varkappa, \qquad \mathbf{P} \{Z_{ \theta_{n+1} } = \widetilde{Z} _{ \theta_{n+1} } \; \& \;Z_{ \theta_n} \neq \widetilde{Z} _{ \theta_n} \} = \mathbf{P} \left (\mathscr{E} _n \prod \limits_{i=0} ^{n-1} \overline{ \mathscr{E} } _i \right),
$$
and
$$ \mathbf{P} \{ \widetilde{ \tau} = \theta_{n+1} \} = \mathbf{P} \left(\mathscr{E} _n\right) \prod \limits_{i=0} ^{n-1} \mathbf{P} \left(\overline{ \mathscr{E} } _i\right)= \varkappa \left(1- \varkappa\right)^n.
$$

Now, using the inequality
 \begin{equation} \label{zvlab4} 
 \mathbf{E} \left(\xi \times \mathbf{1} \left(\mathscr{E}\right)\right) \mathbf{P} \left(\mathscr{E}\right) \leqslant \mathbf{E} \, \xi
 \end{equation}
for non-negative random variable $ \xi$, and considering that
$$ \mathbf{E} \left (\zeta_n \mathbf{1} \left (\bigcap \limits_{i=1} ^{n-1} \overline{ \mathscr{E} _i} \right) \right) = \mathbf{E} \, \zeta_n
$$ for $n>0$, we have
\begin{multline}\label{zvlab5}
 \mathbf{E} \widetilde{ \tau} = \mathbf{E} \, \zeta_0+ \mathbf{E} \left(\zeta_1 \times \mathbf{1} \left(\mathscr{E} _1\right)\right) \mathbf{P} \left(\mathscr{E} _1\right)+ \mathbf{E} \left(\left(\zeta_1+ \zeta_2\right) \times \mathbf{1} \left(\overline {\mathscr{E}} _1\right)\right) \mathbf{1} \left(\mathscr{E} _2\right) \mathbf{P} \left(\overline {\mathscr{E}} _1 \mathscr{E} _2 \right)+
 \\ \\
 +\mathbf{E} \left(\left(\zeta_1+ \zeta_2+ \zeta_3\right) \times \mathbf{1} \left(\overline {\mathscr{E}} _1\right) \mathbf{1} \left(\overline {\mathscr{E}} _2\right) \mathbf{1} \left(\mathscr{E} _3\right)\right) \mathbf{P} \left(\overline {\mathscr{E}} _1 \overline {\mathscr{E}} _2 \mathscr{E} _3\right)+
 \ldots +
 \\ \\
 + \mathbf{E} \left (\left (\sum \limits_{i=1} ^n \zeta_i \right) \times \mathbf{1} \left(\mathscr{E} _n\right) \prod \limits_{i=1} ^{n-1} \mathbf{1} \left(\overline {\mathscr{E}} _i\right) \right) \mathbf{P} \left (\mathscr{E} _n \prod \limits_{i=1} ^{n-1} \overline {\mathscr{E}} _i \right)+ \ldots \leqslant
 \\ \\
 \leqslant \mathbf{E} \, \zeta_0+ \mathbf{E} \, \zeta_1 \left (1+ \varkappa \sum \limits_{i=0} ^ \infty\left(1- \varkappa\right)^i \right) +\left(1- \varkappa\right) \mathbf{E} \, \zeta_2 \left (1+ \varkappa \sum \limits_{i=0} ^ \infty \left(1- \varkappa\right)^i \right) +
 \\ \\
 + \ldots + \left(1- \varkappa\right)^{n-1} \mathbf{E} \, \zeta_n \left (1+ \varkappa \sum \limits_{i=0} ^ \infty \left(1- \varkappa\right)^i \right) + \ldots= \mathbf{E} \, \zeta_0 + 2 \varkappa^{-1} \mathbf{E} \, \zeta_1.
\end{multline}

\noindent Theorem \ref{success} is proved.
 \end{proof}
\begin{center}
 \begin{figure}
 \centering
 \begin{picture}(350,90)
\thinlines
\put(-20,70){\vector(1,0){350}}
\put(-20,20){\vector(1,0){350}}
\put(-10,95){\vector(-1,-1){10}}
\thicklines
\multiput(40,10)(0,5){16}{\line(0,1){3}}
\multiput(150,10)(0,5){16}{\line(0,1){3}}
\multiput(-20,10)(0,5){16}{\line(0,1){3}}
{\qbezier(-20,80)(10,81)(40,70)}
{\qbezier(-20,33)(20,34)(60,20)}
\qbezier(40,70)(65,85)(90,70)
\multiput(60,10)(0,5){18}{\line(0,1){3}}
\multiput(61,71)(2,0){20}{\color{white}\line(0,1){20}}
{\qbezier(60,20)(80,40)(100,20)}
\multiput(100,10)(0,5){18}{\line(0,1){3}}
\qbezier(60,85)(105,86)(120,70)
\multiput(101,71)(2,0){20}{\color{white}\line(0,1){20}}
\qbezier(100,20)(140,40)(180,20)
\multiput(180,10)(0,5){18}{\line(0,1){3}}
\qbezier(100,85)(125,86)(150,70)
\qbezier(150,70)(170,85)(190,70)
\multiput(181,71)(2,0){16}{\color{white}\line(0,1){20}}
\rr\qbezier(180,20)(200,40)(220,20)\bk
\rr\qbezier(180,85)(200,86)(220,70)\bk
\multiput(220,10)(0,5){16}{\line(0,1){3}}
\bb\qbezier(220,20)(250,40)(280,20)\bk
\multiput(280,10)(0,5){16}{\line(0,1){3}}
\bb\qbezier(220,70)(250,90)(280,70)
\qbezier(280,20)(290,40)(300,20)
\qbezier(280,70)(290,90)(300,70)
\qbezier(300,20)(320,31)(330,30)\bk
\multiput(300,10)(0,5){16}{\line(0,1){3}}
\bb\qbezier(300,70)(320,81)(330,80)\bk
\thinlines
\put(-15,98){$\PPP$}
\put(0,85){\scriptsize${\widetilde\zeta_0\sim \widetilde F}$}
\put(50,95){\vector(1,-4){4}}
\put(35,98){\scriptsize$\widetilde\zeta_1\sim F$}
\put(70,100){\vector(-1,-1){10}}
\put(65,102){$\PPP$}
\put(71,93){\scriptsize${\widetilde\Xi_1\sim \widetilde F}$}
\put(110,100){\vector(-1,-1){10}}
\put(105,102){$\PPP$}
\put(113,94){\scriptsize${\widetilde\Xi_2\sim \widetilde F}$}
\put(152,80){\scriptsize$\widetilde\zeta_4\sim F$}
\put(190,100){\vector(-1,-1){10}}
\put(185,102){$\PPP$}
\put(193,94){\scriptsize${\widetilde\Xi_5\sim \widetilde F}$}
\put(5,35){\scriptsize$\zeta_0\sim F_a$}
\put(65,35){\scriptsize$\Xi_1\sim F$}
\put(120,35){\scriptsize$\Xi_2\sim F$}
\put(185,35){\scriptsize$\Xi_3\sim F$}
\put(235,35){\scriptsize$\zeta_4\sim F$}
\put(235,85){\scriptsize$\widetilde \zeta_5= \zeta_4$}
\put(210,13){\scriptsize$\theta_3$}
\put(52,13){\scriptsize$\theta_0$}
\put(92,13){\scriptsize$\theta_1$}
\put(172,13){\scriptsize$\theta_2$}
\put(272,13){\scriptsize$\theta_4$}
\put(292,13){\scriptsize$\theta_5$}
\put(-10,13){\scriptsize$Z_t$}
\put(-10,60){\scriptsize$\widetilde Z_t$}
\put(-18,13){\scriptsize$0$}
\put(-18,63){\scriptsize$0$}
\put(32,60){\scriptsize$\widetilde \theta_0$}
\put(87,60){\scriptsize$\widetilde \theta_1$}
\put(117,60){\scriptsize$\widetilde \theta_2$}
\put(142,60){\scriptsize$\widetilde \theta_3$}
\put(187,60){\scriptsize$\widetilde \theta_4$}
\put(222,60){\scriptsize$\widetilde \theta_5$}
\put(272,60){\scriptsize$\widetilde \theta_6$}
\put(292,60){\scriptsize$\widetilde \theta_7$}
\put(40,0){\scriptsize$\vartheta_0$}
\put(60,0){\scriptsize$\vartheta_1$}
\put(100,0){\scriptsize$\vartheta_2$}
\put(150,0){\scriptsize$\vartheta_3$}
\put(180,0){\scriptsize$\vartheta_4$}
\put(210,0){\scriptsize$\vartheta_5=\widehat\tau$}
\put(280,0){\scriptsize$\vartheta_6$}
\put(300,0){\scriptsize$\vartheta_7$}
\end{picture}
 \caption{Construction of the successful coupling $(\mathscr{Z}_t,\,t\geqslant0).$ }\label{suc}
 \end{figure}
\end{center}

~

 \noindent \textbf{4.} Now we return to the Markov regenerative process $\left(X_t,\,t\geqslant 0\right)$.
The regeneration times $ \theta_0$, $ \theta_1$, \ldots of the process $\left(X_t,\,t\geqslant 0\right)$ form an embedded backward renewal process $\left(Y_t,\,t\geqslant 0\right)$; $Y_t \stackrel {{ \rm \; def} } {= \! \! \!=} t- \max \left \{ \theta_i: \, \theta_i \leqslant t \right \} $ (the backward renewal time of the embedded renewal process) with the distribution of the renewal time $ \mathbf{P} \left \{ \zeta \leqslant s \right \} =F\left(s\right)$; the length of $i$-th regeneration period is $ \theta_i- \theta_{i-1} = \zeta_i \stackrel{ \mathscr{D} } {=} \zeta \; \; $ ($i>1$).

{ \it We will apply the coupling method for extended process $ \left(\mathbb{X} _t,\,t\geqslant 0\right)=\left(\left(X_t,Y_t\right),\,t\geqslant 0\right)$ on the extended state space $ \widehat{ \mathscr{X} } = \mathscr{X} \times \mathbb{R} _+$.}

The random element $ \mathfrak{X} _i= \left \{ X_t, t \in[ \theta_{i-1} , \theta_i) \right \} $ depends on the random variable $ \zeta_i= \theta_i- \theta_{i-1} $; for $A \in \mathscr{B} \left(\mathscr{X}\right)$ denote

$$
 \mathscr {G} _a\left(s,A\right) \stackrel {{ \rm \; def} } {= \! \! \!=} \mathbf{P} \left \{ X_{ \theta_{i-1} +s} \in A| \zeta_i=a \right \} , \; \; s \in[0,a);
$$
$ \mathscr {G} _a\left(s,A\right)$ specify a conditional distribution of $\left(X_t,\,t\geqslant 0\right)$ on the time period $[ \theta_{i-1} , \theta_i)$ given $ \left \{ \theta_i- \theta_{i-1} =a \right \} $.

Therefore if we know all regeneration times of the process $\left(X_t, \,t \geqslant 0\right)$, then we know the (conditional) distribution of the process $\left(X_t,\,t\geqslant 0\right)$ in every time after the first regeneration time $t_0$: this distribution is determined by the conditional distribution $ \mathscr G_{ \zeta_i} \left(s,A\right)$ of the random elements $ \mathfrak X_i$.

Also the first regeneration time depends on the initial state; denote
$$
 \mathscr {H} _a\left(A\right) \stackrel {{ \rm \; def} } {= \! \! \!=} \mathbf{P} \left \{ X_0 \in A| \theta_0=a \right \} = \mathbf{P} \{X_t \in A|\left(\min \{ \theta_i: \, \theta_i \geqslant t \} -t\right)=a \} ;
$$
$ \mathscr {H} _a\left(A\right)$ specify a conditional distribution of $\left(X_t,\,t\geqslant 0\right)$ given $ \{ \zeta_t^{ \ast} =a \} $, where \linebreak$ \zeta_t^{ \ast} \stackrel{ \rm{ \;def} } {= \! \! \!=} \min \{t_i: \,t_i \geqslant t \} -t$ is a residual time of regeneration period at the time $t$.


Now we will construct the successful coupling for extended process $ \left(\mathbb{X} _t,\,t\geqslant 0\right)=\left(\left(X_t,Y_t\right),\,t\geqslant 0\right)$ and its stationary version $ \left(\widetilde{ \mathbb{X} } _t,\,t\geqslant 0\right)=\left(\left(\widetilde X_t, \widetilde Y_t\right),\,t\geqslant 0\right)$.

For this aim we construct the successful coupling $ \left(\mathscr{W},\,t\geqslant 0\right) _t=\left(\left(W_t, \widetilde{W} _t\right),\,t\geqslant 0\right)$ for the backward renewal process $\left(Y_t,\,t\geqslant 0\right)$ and its stationary version $\left( \widetilde{Y} _t,\,t\geqslant 0\right)$ considering that the first renewal time $ \theta_0$ of the process $\left(Y_t,\,t\geqslant 0\right)$ has a distribution $ \mathbb{F} \left(s\right)$.

After construction of renewal points $ \{ \theta_i \} $ of the process $\left(W_t,\,t\geqslant 0\right)$ and renewal points $ \{ \widetilde \theta_i \} $ of the process $\left( \widetilde W_t,\,t\geqslant 0\right)$ we can complete them to the pairs $\left(\left(Z_t,W_t\right),\,t\geqslant 0\right) \stackrel{ \mathscr{D} } {=} \left(\left(X_t,Y_t\right),\,t\geqslant 0\right)$ and $ \left(\left(\widetilde Z_t, \widetilde W_t \right),\,t\geqslant 0\right) \stackrel{ \mathscr{D} } {=} \left(\left(\widetilde X_t, \widetilde Y_t \right),\,t\geqslant 0\right)$ by using $ \mathscr {G} _a\left(s,A\right)$ and $ \mathscr {H} _a\left(A\right)$.

In the construction of the processes $\left(W_t,\,t\geqslant 0\right)$ and $\left( \widetilde W_t,\,t\geqslant 0\right)$ we can apply the technics used in the proof of the Theorem \ref{success} in such a way that
$$
 \tau\left(X_0\right)= \inf \left \{t: \,W_t= \widetilde W_t \right \} \leqslant t_0+ \sum \limits_{i=1} ^ \nu \zeta_i,
$$
where $ \mathbf{P} \{ \nu>n \} =\left(1- \varkappa\right)^n$.

So, for $t \geqslant  \tau\left(X_0\right)$ we have $W_{t} = \widetilde W_{t} $, by construction of the backward renewal processes $\left(W_t,\,t\geqslant 0\right)$ and $ \left(\widetilde W_t,\,t\geqslant 0\right)$.

Then for $t \geqslant  \tau\left(X_0\right)$ we have
$$
\mathscr{P} ^{X_0} _t\left(A\right)= \mathbf{P} \{X_t \in A \} = \mathbf{P} \{ \widetilde X_t \in A \} = \mathscr{P} \left(A\right)
$$
as the distribution of $X_t$ and $ \widetilde X_t$ (after the first renewal point) is determined only by renewal points.

 \begin{proof} [Proof of the Theorem \ref{result}.]~

 \textbf{1.} Using the inequality (\ref{zvlab4}) and Jensen's inequality for $k \geqslant 1$ and $a_i\leqslant 0$ in the form
 $$
 \left (\sum \limits_{i=1} ^n a_i \right)^k \leqslant n^{k-1} \sum \limits_{i=1} ^n a_i^k
$$
we have from (\ref{zvlab5}):
\begin{multline*}
\mathbf{E} \left({ \tau} \left(X_0\right)\right)^k \leqslant\mathbf{E} \left(\widetilde{ \tau} \left(X_0\right)\right)^k \leqslant
\\ \\
\leqslant\mathbf{E} \left (\mathbf{P} \left (\mathscr{E} _n \prod \limits_{i=1} ^{n-1} \overline{ \mathscr{E} } _i \right) \sum \limits_{n=1} ^ \infty \left (\left(n+1\right)^{k-1} \left (\zeta_0^k+ \sum \limits_{i=1} ^n \zeta_i^k \right) \mathbf{1} \left(\mathscr{E} _n \right) \prod \limits_{i=1} ^{n-1} \mathbf{1} \left(\overline{ \mathscr{E} } _i\right) \right) \right) \leqslant
 \\ \\
\leqslant \mathbf{E} \, \zeta_0^k \varkappa \sum \limits_{n=1} ^ \infty \left (\left(n+1\right)^{k-1} \left(1- \varkappa\right)^{n-1} \right)+\mathbf{E} \, \zeta_1^k \sum \limits_{n=1} ^ \infty \left (\left (\varkappa n\left(n+2\right)^{k-1} + \left(n+1\right)^{k-1} \right) \left(1- \varkappa\right)^{n-1} \right)=
\\ \\
=C\left(X_0,k\right)\Big[=\mathscr{C}(\zeta_0,k)\Big],
\end{multline*}
this inequality implies the statement 1 of the Theorem \ref{result}.

 \textbf{2.}
Using the inequality (\ref{zvlab4}) and considering that $\EE\,e^{\dd a \zeta_1\1\left(\overline{\EEE}_i\right)}=\mathfrak{P}_a\left(\zeta_1\right)$ we have from (\ref{zvlab5}):
\begin{multline*}
\mathbf{E} \,e^{\dd \alpha \tau \left(X_0\right)} \leqslant \mathbf{E} \,e^{\dd \alpha \widetilde{\tau} \left(X_0\right)} \leqslant \mathbf{E} \left (\mathbf{P} \left (\mathscr{E} _n \prod \limits_{i=1} ^{n-1} \overline{ \mathscr{E} } _i \right) \times \sum \limits_{n=1} ^ \infty \left (e^{\dd \alpha \left (\zeta_0+ \sum \limits_{i=1} ^n \zeta_i \right)} \mathbf{1} \left(\mathscr{E} _n \right) \prod \limits_{i=1} ^{n-1} \mathbf{1} \left(\overline{ \mathscr{E} } _i\right) \right) \right) \leqslant
\\ \\
\leqslant
\mathbf{E} \,e^{\dd \alpha \zeta_0} \sum \limits_{n=1} ^ \infty\left (\left(1-\varkappa\right)^n \prod\limits_{i=1}^n\mathbf{E} \left (e^{\dd \alpha \zeta_1}\1\left(\overline{\mathscr{E}}_i\right) \right)\right)=
\mathbf{E} \,e^{\dd \alpha \zeta_0}\sum \limits_{n=1} ^ \infty \left ( \left(1-\varkappa\right)^n \left (\intl_0^\infty e^{\dd \alpha s}\ud \frac{\Psi\left(s\right)}{1-\varkappa}\right)\right)=
\\ \\
=
\mathbf{E} \,e^{\dd \alpha \zeta_0}\sum \limits_{n=1} ^ \infty\left (\mathfrak{P}_\alpha\left(\zeta_1\right)\right)^n
=\frac{\mathfrak{P}_\alpha\left(\zeta_1\right) \mathbf{E} \,e^{\dd \alpha \zeta_0} } {1- \mathfrak{P}_\alpha\left(\zeta_1\right) } = \mathfrak{C} \left(X_0,\alpha\right)\Big[=\mathcal{C}\left(\zeta_0,\alpha\right)\Big],
\end{multline*}
this inequality implies the statement 2 of the Theorem \ref{result}.
 \end{proof}
 \section{Applying to the queueing theory.} In the queuing theory the distribution of the regenerative process period is often unknown.
But often the regeneration period can be split into two parts, usually this is a busy period and an idle period.
And as a rule the idle period has a known non-discrete distribution.
So, in this situation the queueing process has an embedded \emph{alternating renewal process}.

If the bounds of moments of a busy period are also known, then we can apply our construction for embedded alternating renewal process by some modification.

This modification is a construction of a successful coupling for alternating renewal process $\left(X_t,\, t\geqslant 0\right)$ and its stationary version, namely.

Let $\left(X_t,\, t\geqslant 0\right)$ be an alternating renewal process having two states, 1 and 2, say.
The time of the stay of the process in a state $i$ has the distribution function $F_i\left(s\right)= \mathbf{P} \left \{ \zeta^{\left(i\right)} \leqslant s \right \} $, and periods of stay of the process $\left(X_t,\, t\geqslant 0\right)$ in states 1 and 2 alternate.
This process is non-Markov.

We complement the state of the process by the time, during which the process located continually in this state: if the completed process is the process $\left(Y_t,\, t\geqslant 0\right)=\left(\left(n_t,x_t\right),\, t\geqslant 0\right)$ (denote $n\left(Y_t\right)=n_t$, $x\left(Y_t\right)=x_t$), then at the time $t$ the process is in the state $n_t$, and \linebreak$x_t \stackrel{ \rm{ \;def} } {= \! \! \!=} t-\sup \{s<t: \,n\left(Y_s\right) \neq n_t \} $ (for definiteness, we assume $\mathbb{F}\left(s\right)=F_a\left(s\right)$, $x_0=a$, and $x_t=a+t$ for $t\in[0, \inf\{s>0:\,n_s\neq n_0\})$).

The Markov regenerative process $\left(Y_t,\, t\geqslant 0\right)$ has a state space $ \left \{ 1,2 \right \} \times \mathbb R_+$.

Denote $c\left(n\right) \stackrel {{ \rm \; def} } {= \! \! \!=} \mathbf{1} \left(n=2\right)+2 \times \mathbf{1} \left(n=1\right)$: here $c\left(1\right)=2$, $c\left(2\right)=1$.

If $Y_0=\left(i,a\right)$, then the process $Y_t$ changes its first component $n\left(Y_t\right)$ at the times \linebreak $ \zeta^{\left(a,i\right)} = \theta_{0,i} $, $ \theta_{0,c\left(i\right)} $, $ \theta_{1,i} $, $ \theta_{1,c\left(i\right)} $, \ldots.

Denote
$$ \zeta^{\left(i\right)} _j= \theta_{j,c\left(i\right)} - \theta_{j,i} \stackrel{ \mathscr{D} } {=} \zeta^{\left(i\right)} ; \qquad \zeta^{\left(c\left(i\right)\right)} _j= \theta_{j,i} - \theta_{j-1,c\left(i\right)} \stackrel{ \mathscr{D} } {=} \zeta^{\left(c\left(i\right)\right)} ;
$$
$$\mathbf{P} \left \{ \zeta^{\left(a,i\right)} \leqslant s \right \} = \frac{F_i\left(a+s\right)-F_i\left(s\right)} {1-F_i\left(s\right)} .
$$

We assume that the distribution function $F_1\left(s\right)$ satisfies the condition ($ \ast$), and random variables $ \zeta^{\left(a,i\right)} $, $ \zeta^{\left(1\right)} _j$, $ \zeta^{\left(2\right)} _j$ are mutually independent.

Suppose that $ \mathbf{E} \varphi \left (\zeta^{\left(i\right)} \right)< \infty$ for some increasing positive function $ \varphi\left(t\right)$.

If $ \mathbf{E} \zeta^{\left(i\right)} = \mu_i< \infty$, then distribution $ \mathscr{P} _t^{Y_0} $ of the process $Y_t$ with every initial state $Y_0$ weakly convergent to the stationary distribution $ \mathscr{P} $; for the stationary version $ \widetilde Y_t$ of the process $Y_t$ we have
$$ \mathbf{P} \{n\left(\widetilde Y_t\right)=1 \} = \frac{ \mu_1} { \mu_1+ \mu_2} \stackrel{ \rm{ \;def} } {= \! \! \!=} p.
$$
If we know only an estimate $ \mu_2 \leqslant m_2$, then
$$
p \geqslant \rho \stackrel{ \rm{ \;def} } {= \! \! \!=} \frac{ \mu_1} { \mu_1+m_2} .
$$

For construction of successful coupling $\left(\mathscr{Z}_t,\, t\geqslant 0\right)=\left(\left(Z_t, \widetilde Z_t\right),\, t\geqslant 0\right)$ for the processes \linebreak$\left(Y_t,\, t\geqslant 0\right)$ and $ \left(\widetilde Y_t,\, t\geqslant 0\right)$ we will again construct the times when at least one of them change the first component.

At the times $t'_i$ such that $n\left(Y_{t_i'-0}\right)=2$ and $n\left(Y_{t_i'+0}\right)=1$ we use (with probability $p$) the random variables $ \Xi_i$ for $Y_t$ and $ \widetilde \Xi_i$ for $ \widetilde Y_t$;
$$
 \mathbf{P} \{ \Xi_i \leqslant s \} =F_1\left(s\right),\qquad \mathbf{P} \{ \widetilde \Xi_i \leqslant s \} = \widetilde F_1\left(s\right)=\frac{ \dd\int \limits_0^s \left (1- F_1\left(u\right) \right) \, \mathrm{d} u}{\mu_1},
$$
and
$$
\mathbf{P} \left\{ \Xi_i= \widetilde \Xi_i \right\} = \varkappa= \int \limits_{ \left \{s: \, \exists\; F_1'\left(s\right) \right \} } F_1'\left(s\right) \wedge \widetilde F_1'\left(s\right) \, \mathrm{d} s.
$$

And with probability $q=1-p$ at the time $ \theta_i'$ we use a procedure of the prolongation of alternating renewal process $ \widetilde Y_t$ by using the distribution $ \widetilde F_2\left(s\right)=\left(\mu_2\right)^{-1} \int \limits_0^s\left(1-F_2\left(u\right)\right) \, \mathrm{d} u$.

So,
$$
\widetilde \tau\left(Y_0\right) \stackrel{ \rm{ \;def} } {= \! \! \!=} \inf \left \{t>0: \,Z_t= \widetilde Z_t \right \} \leqslant \theta_1'+ \zeta_ \nu^{\left(1\right)} + \sum \limits_{i=1} ^{ \nu-1} \left (\zeta_i^{\left(1\right)} + \zeta_i^{\left(2\right)} \right),
$$
where $ \mathbf{P} \{ \nu>n \} =\left(1-p \varkappa\right)^n \Big[\leqslant \left(1- \rho \varkappa\right)^n \Big]$.

Hence, if $ \mathfrak{P}_a\left(\zeta_1\right)\mathbf{E}\, e^{\dd a \zeta^{\left(1\right)} }<1$, we can find a bound $ \mathfrak{C} \left(Y_0,a\right)$ for $ \mathbf{E}\, e^{\dd a \widetilde{ \tau} \left(Y_0\right) }$:
$$
\mathbf{E}\,e^{\dd a \widetilde{ \tau} \left(Y_0\right) } \leqslant \mathfrak{C} \left(Y_0,a\right).
$$

Therefore we have
$$
\left \| \mathscr{P} _t^{Y_0} - \mathscr{P} \right \|_{TV} \leqslant 2 \frac{\mathfrak{C} \left(Y_0,a\right)}{e^{\dd a t}}.
$$

Furthermore, if the distributions $F_1$ and $F_2$ satisfy the condition $\left(\ast\right)$, then the inequality $\EE\,{a\zeta^{\left(i\right)}}<\infty$ for some $a>0$ implies an existence of $\alpha>0$ such that $\mathfrak{P}_\alpha\left (\zeta^{\left(i\right)}\right)<1$ , and we can find the bounds $ \mathfrak{C} \left(Y_0,a\right)$ for $ \mathbf{E}\,{a \widetilde{ \tau} \left(Y_0\right)}$.

And if $ \mathbf{E} \left (\zeta^{\left(i\right)} \right)^K< \infty$ for some $K \geqslant1$, then we can estimate $ \mathbf{E} \left(\widetilde{ \tau} \left(Y_0\right) \right)^k$ for $k \in[1,K]$:
$$
\mathbf {E} \left(\tau\left(Y_0\right) \right)^k \leqslant C\left(Y_0,k\right),
$$
where the constant $ C\left(Y_0,k\right)$ is calculated by the schema similar to the schema of the proof of the Theorem \ref{result}.

Again we have
$$
\left \| \mathscr{P} _t^{Y_0} - \mathscr{P} \right \|_{TV} \leqslant 2\frac{C\left(Y_0,k\right)}{t^{k}}.
$$

If the regeneration period of queueing process $\left(Q_t,\,t\geqslant 0\right)$ can be split into two independent parts, then this process has an embedded alternating renewal process.

Firstly we will extend this queueing process $\left(Q_t,\,t\geqslant 0\right)$ to the Markov process $\left(X_t,\,t\geqslant 0\right)$.

Then we will complete the process $\left(X_t,\,t\geqslant 0\right)$ by the embedded alternating renewal process $\left(Y_t,\,t\geqslant 0\right)$ competed by the time from the last change of its state (as in previous part).

Using the technique of the proof of the Theorem \ref{result} we can find the bounds for the convergence rate for extended queueing process; also this bounds are useful for the original process $\left(Q_t,\,t\geqslant 0\right)$.

\textbf{Acknowledgements. } The author is grateful to A.Yu.Veretennikov for constant attention to this work, and to V.V.Kozlov for useful discussion, and to Yu.S.~Semenov for invaluable help.


\begin{thebibliography}{}
\bibitem{bor} \emph{Borovkov~A.~A.} Stochastic processes in queueing theory.~--- Nauka 1972, Springer-Verlag 1976.

\bibitem{Kalash} \emph{Kalashnikov~V.~ V.}, Uniform Estimation of the Convergence Rate in a Renewal Theorem for the Case of Discrete Time // Theory Probab. Appl.,~--- 22(2), pp 390--394.



 \bibitem{Doeb} Doeblin,W.: Expos\'{e}e de la th\'{e}orie des cha\^{\i}nes simple constantes de Markov \`a un nombre fini d'\'etats. Rev. Math. Union Interbalcan, (2), 77--105 (1938)
 \bibitem{Down} Down, D., Meyn, S.P., Tweedie, R.L.: Exponential and Uniform Ergodicity of Markov Processes. Ann. Probab., Volume 23, Number 4, 1671--1691 (1995)
 \bibitem{Grif} Griffeath D.: A maximal coupling for Markov chains, Zeitschrift f\"ur Wahr\-schein\-lich\-keit\-s\-theo\-rie und Verwandte Gebiete. Volume 31, Issue 2, 95--106 (1975)
 \bibitem{zvbib4} Kalashnikov, V.V.: Uniform Estimation of the Convergence Rate in a Renewal Theorem for the Case of Discrete Time. Theory Probab. Appl., 22(2), 390--394 (1978)
 \bibitem{kalashreg} Kalashnikov, V.V.: Topics on Regenerative Processes. CRC Press (1994)
 \bibitem{Lindvall-Lec} Lindvall, T.: Lectures on the Coupling Method. Wiley, New York (1992)
 \bibitem{Lindvall} Lindvall, T.: On Coupling of Continuous-Time Renewal Processes. Journal of Applied Probability , Vol. 19, No. 1 , 82--89 (1982)
 \bibitem{Lund} Lund, R.B., Meyn, S.P., Tweedie, R.L. Computable exponential convergence rates for stochastically ordered Markov processes. Ann. Appl. Probab. Volume 6, Number 1, 218-237 (1996)
 \bibitem{Rob} Roberts, G., Rosenthal, J.: Quantitative Bounds for Convergence Rates of Continuous Time Markov Processes. Electron. J. Probab. Volume 1, paper no. 9, 1--21 (1996)\\ ${\rm https://projecteuclid.org/download/pdf\underline{\;\;}1/euclid.ejp/1453756472}$
 \bibitem{Silv} Silvestrov, D.S.: Coupling for Markov renewal processes and the rate of convergence in ergodic theorems for processes with semi-Markov switchings. Acta Applicandae Mathematica. Volume 34, Issue 1, 109--124 (1994)
 \bibitem{Thor} Thorisson H.: Coupling, Stationarity, and Regeneration, Springer (2000)
 \bibitem{ThorDistr} Thorisson, H.: On Maximal and Distributional Coupling. Ann. Probab., Volume 14, Number 3, 873--876 (1986)
 \bibitem{VerAiT} Veretennikov, A.Yu.: On the rate of convergence to the stationary distribution in the single-server queuing system. Autom. Remote Control 74(10), 1620--1629 (2013).
 \bibitem{VerSeva} Veretennikov, A.Yu.: On the rate of convergence for infinite server Erlang-Sevastyanov's problem. Queueing Systems, Volume 76, Issue 2, 181--203 (2014)
 \bibitem{vzMPRF} Veretennikov, A.Yu., Zverkina, G.A.: Simple Proof of Dynkin's Formula for Single-Server Systems and Polynomial Convergence Rates. Markov Processes And Related Fields, v.20(3), 479--504 (2014)
 \bibitem{v-z2015} Veretennikov, A., Zverkina, G.: On Polynomial Bounds of Convergence for the Availability Factor. In: Distributed Computer and Communication Networks. Volume 601 of the series Communications in Computer and Information Science, pp. 358--369 (2016)
\end{thebibliography}
\end{document}